\newtheorem{theorem}{Theorem}[section]
\newtheorem{lem}{Lemma}[section]
\numberwithin{equation}{section}
\newcommand{\indicator}[1]{\mathbbm{1}_{ {#1} }}
\title{On Local Median Oscillation Decompositions}
\author{Jonathan Poelhuis and Alberto Torchinsky}
\begin{document}
\begin{abstract}
In this note we generate two local median oscillation decompositions of an arbitrary measurable function and discuss some applications to Calder\'{o}n-Zygmund singular integral operators $T$.
These applications rely on the inequality $M^{\sharp}_{0,s}(Tf)(x) \leq c\,Mf(x)$, and we complete the results  given here with
 a  discussion  of a local version of this estimate.
\end{abstract}

\maketitle

The ``local mean oscillation'' decomposition of Lerner has proven particularly useful in recent literature. Such a functional decomposition was first considered in terms of averages by Garnett and Jones \cite{GarnettJones}, then suggested in terms of medians by Fujii \cite{Fujii1991}, and proved by Lerner \cite{Lerner2010}, \cite{LernerSummary}. In this note we generate two local median oscillation decompositions of an arbitrary measurable function and discuss some applications to Calder\'{o}n-Zygmund singular integral operators $T$.
These applications rely on the inequality $M^{\sharp}_{0,s}(Tf)(x) \leq c\,Mf(x)$
established in  \cite{JawerthTorchinsky}, and we complete the results  given here with
a local version of this estimate.

In what follows, we adopt the notations of \cite{MedContOsc} and \cite{Stromberg}. In particular, all cubes have sides parallel to the axes. Also, for a cube $Q \subset \mathbb{R}^n$ and $0 < t < 1$, we say that $m_f(t,Q) = \sup\{M : |\{y \in Q: f(y) < M\}| \leq t|Q|\}$ is the (maximal) median of $f$ over $Q$ with parameter $t$. For a cube $Q_0 \subset \mathbb{R}^n$ and $0 < s \leq 1/2$, the local sharp maximal function restricted to $Q_0$ of a measurable function $f$ at $x \in Q_0$ is
\[M^{\sharp}_{0,s,Q_0}f(x) = \sup_{x \in Q, Q \subset Q_0}\inf_c \; \inf \{\alpha \geq 0: |\{y \in Q: |f(y) - c| > \alpha\}| < s|Q|\}.\]


Additionally,
 we consider the maximal function $m^{t,\Delta}_{Q_0}$ defined as follows.
Let $\mathcal{D}$ be the set of dyadic cubes in $\mathbb{R}^n$. For a cube $Q \subset \mathbb{R}^n$, let $\mathcal{D}(Q)$ denote the family of dyadic subcubes relative to $Q$; that is to say, those formed by repeated dyadic subdivision of $Q$ into $2^n$ congruent subcubes.  Then
\[m^{t,\Delta}_{Q_0} f(x) = \sup_{x \in Q, Q \in \mathcal{D}(Q_0)} |m_f(t,Q)|.\]
A related non-dyadic maximal function was introduced by A. P. Calder\'{o}n  in order to exploit cancellation to obtain estimates for singular integrals
\cite{Calderon1972}.

Finally,   $\widehat{Q}$ denotes the dyadic parent of a cube $Q$.

\section{First Decomposition}   

Our first result holds for a range of indices, and so it extends Lerner's decomposition, which corresponds to the case $t=1/2$, $s=1/4$ in Theorem 1.1. On the other hand, the bound below is larger than his, but   it suffices for some applications. Also, the proof relies on medians and is somewhat more geometric.
 
\begin{theorem}
Let $f$ be a measurable function on a fixed cube $Q_0 \subset \mathbb{R}^n$, $0 < s < 1/2$, and $1/2 \leq t < 1-s$. Then there exists a (possibly empty) collection of subcubes $\{Q^v_j\} \subset \mathcal{D}(Q_0)$ and a family of collections
of indices $\{I^v_2\}_v$ such that
\begin{enumerate}
\item[\rm (i)]
for a.e. $\! x \in Q_0$,
\begin{align*}
|f(x) - m_f(t,Q_0)| &\leq 4M^{\sharp}_{0,s,Q_0}f(x)
\\
+ \sum_{v=1}^{\infty}&\sum_{j \in I^v_2} \Big (10n\inf_{y \in Q^v_j}M^{\sharp}_{0,s,\widehat{Q^v_j}}f(y) + 2 \inf_{y \in Q^v_j}M^{\sharp}_{0,s,Q^v_j}f(y) \Big )\indicator{Q^v_j}(x);
\end{align*}
\item[\rm (ii)]
for fixed $v$, the $\{Q^v_j\}$ are nonoverlapping;
\item[\rm(iii)]
if\, $\Omega^v = \bigcup_j Q^v_j$, then $\Omega^{v+1} \subset \Omega^v$; and,
\item[\rm (iv)]
for all $j$, $|\Omega^{v+1} \cap Q^v_j| \le \frac{s}{1-t} \, |Q^v_j|$.
\end{enumerate}
\end{theorem}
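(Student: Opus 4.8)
The plan is to build the two objects by an iterated dyadic Calder\'on--Zygmund stopping time tuned to medians, and then to recover (i) by telescoping. The basic ingredient is the following elementary fact, valid since $t\ge 1/2$: if $R$ is a cube and $c,\alpha$ satisfy $|\{y\in R:|f(y)-c|>\alpha\}|<(1-t)|R|$, then $|m_f(t,R)-c|\le\alpha$ (both one-sided level sets of $f$ about $c$ are too small to move the maximal median past $c\pm\alpha$). For each cube $R$ I fix $c_R,\alpha_R$ nearly realizing the inner infima defining $M^{\sharp}_{0,s,R}f$ over $R$ itself, so that $E_R=\{y\in R:|f(y)-c_R|>\alpha_R\}$ has $|E_R|<s|R|$ and $\alpha_R\le\inf_{y\in R}M^{\sharp}_{0,s,R}f(y)$; since $s<1-t$, the elementary fact applies and yields $|m_f(t,R)-c_R|\le\alpha_R$, hence $|f(y)-m_f(t,R)|\le 2\alpha_R$ for every $y\in R\setminus E_R$.

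\emph{Construction.} Put $\Omega^0=Q_0$, $Q^0_1=Q_0$. Given the disjoint generation-$(v{-}1)$ cubes $\{Q^{v-1}_i\}$, inside each $Q^{v-1}_i$ select the maximal cubes $Q\in\mathcal D(Q^{v-1}_i)$ with $|E_{Q^{v-1}_i}\cap Q|\ge(1-t)|Q|$; these form generation $v$, $\Omega^v$ is their union, and their indices are split into $I^v_1$ and $I^v_2$ by a dichotomy in the stopping argument (whether the dyadic parent of the selected cube coincides with the ambient $Q^{v-1}_i$ or is a proper dyadic subcube of it), only the second family being retained in (i). Then (ii) is the disjointness of maximal dyadic cubes; (iii) holds because generation-$v$ cubes sit inside generation-$(v{-}1)$ cubes; and since the selected cubes inside $Q^{v-1}_i$ are disjoint with $E_{Q^{v-1}_i}$-density at least $1-t$ while $|E_{Q^{v-1}_i}|<s|Q^{v-1}_i|$, summing gives $|\Omega^v\cap Q^{v-1}_i|\le\frac{s}{1-t}|Q^{v-1}_i|$, which is (iv). The same estimate yields $|\Omega^{v+1}|\le\frac{s}{1-t}|\Omega^v|$, so $\frac{s}{1-t}<1$ forces $|\bigcap_v\Omega^v|=0$ and a.e.\ $x\in Q_0$ lies in $\Omega^{v_x}\setminus\Omega^{v_x+1}$ for a finite $v_x\ge 0$.

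\emph{Telescoping.} Fix such an $x$ and let $Q^v$ denote its generation-$v$ cube, $0\le v\le v_x$. Then
\[
f(x)-m_f(t,Q_0)=\big(f(x)-m_f(t,Q^{v_x})\big)+\sum_{v=1}^{v_x}\big(m_f(t,Q^{v})-m_f(t,Q^{v-1})\big).
\]
Since $x\notin\Omega^{v_x+1}$, the dyadic $E_{Q^{v_x}}$-density at $x$ is $<1-t<1$, so off a null set $x\notin E_{Q^{v_x}}$ and the first term is at most $2\alpha_{Q^{v_x}}\le 2\inf_{y\in Q^{v_x}}M^{\sharp}_{0,s,Q^{v_x}}f(y)$. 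For each jump write $m_f(t,Q^{v})-m_f(t,Q^{v-1})=\big(m_f(t,Q^{v})-m_f(t,\widehat{Q^{v}})\big)+\big(m_f(t,\widehat{Q^{v}})-m_f(t,Q^{v-1})\big)$; by maximality $\widehat{Q^{v}}$ is unselected, so it and $Q^{v-1}$ both have $E_{Q^{v-1}}$-density $<1-t$, and the elementary fact bounds the second bracket by $2\alpha_{Q^{v-1}}\le 2\inf_{y\in Q^{v-1}}M^{\sharp}_{0,s,Q^{v-1}}f(y)$. Summing, the $v=0$ instance contributes $2\alpha_{Q_0}\le 2M^{\sharp}_{0,s,Q_0}f(x)\le 4M^{\sharp}_{0,s,Q_0}f(x)$, and the remaining such contributions are absorbed into the $M^{\sharp}_{0,s,Q^{v}_{j}}$-terms (for $j\in I^v_2$) already present in (i) while the indices in $I^v_1$ are controlled by data accounted for at the previous generation; it then remains to estimate the first brackets, i.e.\ to compare the median of $f$ over a cube with its median over the dyadic parent.

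\emph{Main obstacle.} This last comparison is the crux. A dyadic child $Q^{v}$ has $|Q^{v}|=2^{-n}|\widehat{Q^{v}}|$, so control over $\widehat{Q^{v}}$ pins the relevant level sets of $f$ only up to a factor $2^{n}$ --- too lossy to feed directly into the elementary fact. The remedy is a chain of $O(n)$ cubes interpolating between $Q^{v}$ and $\widehat{Q^{v}}$ (doubling one coordinate at a time and replacing the resulting boxes by comparable dyadic subcubes of $\widehat{Q^{v}}$, with consecutive cubes overlapping in a fixed proportion), so that comparing medians across each link costs a bounded multiple of the oscillation of $f$ over a subcube of $\widehat{Q^{v}}$, hence of $\inf_{y\in Q^{v}}M^{\sharp}_{0,s,\widehat{Q^{v}}}f(y)$, and there are $O(n)$ links. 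Tracking the constants so that the total is at most $10n\inf_{y\in Q^{v}}M^{\sharp}_{0,s,\widehat{Q^{v}}}f(y)$, and verifying that the countably many ``for a.e.\ $x$'' exceptional sets can be discarded once and for all, is where the genuine effort goes; the rest is the bookkeeping sketched above.
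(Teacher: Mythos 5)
Your outer construction (stopping on the near-optimal level sets $E_R$ by dyadic $E$-density $\ge 1-t$, then telescoping medians along the chain of stopping cubes containing $x$) is a legitimate variant of the paper's argument, and your ``elementary fact'' and the measure estimates giving (ii)--(iv) are fine. But the heart of the theorem is exactly the step you defer to your ``main obstacle'': the comparison $|m_f(t,Q^v_j)-m_f(t,\widehat{Q^v_j})|\le 10n\,\inf_{y\in Q^v_j}M^{\sharp}_{0,s,\widehat{Q^v_j}}f(y)$. In the paper this is not re-proved either, but it is a quoted result (Lemma 4.3 of the authors' earlier paper, together with Lemma 4.1, Theorem 2.1 and (1.10) there), so the paper's proof rests on established estimates; your proposal rests on a sketch. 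Moreover, the chaining scheme you sketch does not obviously produce the stated constant: a single link compares medians at parameter $t$ of two cubes using a level-$s$ oscillation over the larger (or over a common ambient) cube, and for the bad set to be small enough to pin the median of the smaller cube you need a size (or non-overlap) ratio of roughly $(1-t)/s$, respectively $(1-t)-s$. Since $s$ may be arbitrarily close to $1-t$ (e.g.\ $t=1/2$, $s\uparrow 1/2$), the admissible per-link ratio tends to $1$, the number of links needed to bridge the factor $2^n$ between $Q^v_j$ and $\widehat{Q^v_j}$ is not $O(n)$ uniformly in $s,t$, and the resulting constant blows up rather than being $10n$. So as written, the crucial lemma is neither proved nor plausibly provable by the route you indicate; you would need either to import the cited lemma or to find a genuinely different argument for it.

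There is also a secondary, fixable, bookkeeping problem: your split into $I^v_1$ and $I^v_2$ (parent equal to the ambient stopping cube or not), with only $I^v_2$ retained in (i), does not match what your telescoping produces. When $\widehat{Q^v}=Q^{v-1}$ the jump still costs a term $10n\inf_{y\in Q^v}M^{\sharp}_{0,s,\widehat{Q^v}}f(y)\,\indicator{Q^v}(x)$, and this is not ``controlled by data accounted for at the previous generation'' (the infimum over the smaller cube $Q^v$ dominates, not is dominated by, the infimum over $Q^{v-1}$), so dropping those indices from the sum in (i) loses needed terms. In the paper $I^v_2$ simply indexes the stopping cubes where the decomposition continues, and the final bound is assembled explicitly in (1.14)--(1.17); your absorption claims would need to be carried out at that level of detail to land on the stated right-hand side with the constants $4$, $10n$, $2$.
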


\begin{proof}
Let $E^1 = \{x \in Q_0: |f(x) - m_f(t,Q_0)| > 2\inf_{y \in Q_0}M^{\sharp}_{0,s,Q_0}f(y)\}$. If $|E^1|=0$, the decomposition halts -- trivially, for a.e. $\! x \in Q_0$,
\begin{equation}
|f(x) - m_f(t,Q_0)| \leq 2 \inf_{y \in Q_0}M^{\sharp}_{0,s,Q_0}f(y).
\end{equation}

So suppose that $|E^1| > 0$. Recall that by Lemma 4.1 in \cite{MedContOsc}, for $\eta > 0$,
\[ \big|\big\{x \in Q_0: |f(x) - m_f(t,Q_0)| \geq 2 \inf_{y \in Q_0}M^{\sharp}_{0,s,Q_0}f(y) + \eta\big\}\big| < s|Q_0|.\]
Thus, picking $\eta_k \to 0^+$, by continuity from below it readily follows that
\begin{equation}
\big|\big\{x \in Q_0: |f(x) - m_f(t,Q_0)| > 2 \inf_{y \in Q_0}M^{\sharp}_{0,s,Q_0}f(y)\big\}\big| \le s|Q_0|.
\end{equation}

Now let $f^0 = \left (f - m_f(t,Q_0) \right )\indicator{Q_0}$, and
\[\Omega^1 = \big\{x \in Q_0: m^{t,\Delta}_{Q_0}(f^0)(x) > 2\inf_{y \in Q_0}M^{\sharp}_{0,s,Q_0}f(y)\big\}.\]
Then by Theorem 2.1 in \cite{MedContOsc}, $E^1 \subset \Omega^1$ and $|\Omega^1| > 0$ as well. Write $\Omega^1 = \bigcup_j Q^1_j$ where the $Q^1_j$'s are nonoverlapping maximal dyadic subcubes of $Q_0$ such that
\begin{equation}
|m_{f^0}(t,Q^1_j)| > 2\inf_{y \in Q_0}M^{\sharp}_{0,s,Q_0}f(y), \quad {\text{and}}\quad  |m_{f^0}(t,\widehat{Q^1_j})| \leq 2\inf_{y \in Q_0}M^{\sharp}_{0,s,Q_0}f(y).
\end{equation}
Since $m_{f^0}(t,Q_0) = 0$, $Q^1_j \neq Q_0$ for any $j$.

Now since $t \geq 1/2$, from (1.10) in \cite{MedContOsc} it follows that
\begin{equation}
2\inf_{y \in Q_0}M^{\sharp}_{0,s,Q_0}f(y) < |m_{f^0}(t,Q^1_j)| \leq m_{|f^0|}(t,Q^1_j),
\end{equation}
and therefore by the definition of median
\begin{equation}
\big|\big\{x \in Q^1_j: |f^0(x)| > 2\inf_{y \in Q_0}M^{\sharp}_{0,s,Q_0}\big\}\big| \ge (1-t)|Q^1_j|.
\end{equation}
When these are summed, we have by (1.2) that
\begin{align*}
(1-t)\sum_j |Q^1_j| &\leq \sum_j \big|\big\{x \in Q^1_j: |f^0(x)| > 2\inf_{y \in Q_0}M^{\sharp}_{0,s,Q_0}f(y)\big\}\big|
\\
&\le \big|\big\{x \in Q_0: |f^0(x)| > 2\inf_{y \in Q_0}M^{\sharp}_{0,s,Q_0}f(y)\big\}\big| \le s|Q_0|,
\end{align*}
so that
\begin{equation}
\sum_j |Q^1_j| \leq \frac{s}{1-t}|Q_0|,
\end{equation}
where by the choice of $s$ and $t$, $s/(1-t) < 1$.

Let $\alpha^1_j = m_{f^0}(t,Q^1_j)$. By Lemma 4.3 in \cite{MedContOsc}, we see that
\begin{equation}
\big| m_{f^0}(t,Q^1_j) - m_{f^0}(t,\widehat{Q^1_j})\big| \le 10n\inf_{y \in Q^1_j}M^{\sharp}_{0,s,\widehat{Q^1_j}}f(y),
\end{equation}
and therefore by (1.3) and (1.7)
\begin{align}
|\alpha^1_j| &\leq \big|m_{f^0}(t,Q^1_j) - m_{f^0}(t, \widehat{Q^1_j})\big| + \big|m_{f_0}(t,\widehat{Q^1_j})\big|
\notag
\\
&\leq 10n\inf_{y \in Q^1_j}M^{\sharp}_{0,s,\widehat{Q^1_j}}f(y) + 2\inf_{y \in Q_0}M^{\sharp}_{0,s,Q_0}f(y).
\end{align}

The first iteration of the local median oscillation decomposition of $f$ when $|E^1| > 0$ is then as follows: for a.e. $\! x \in Q_0$, with $g^1 = f^0\indicator{Q_0 \setminus \Omega^1}$,
\[f^0(x) = g^1(x) + \sum_j \alpha^1_j \indicator{Q^1_j}(x) + \sum_j \big (f^0(x) - m_{f^0}(t,Q^1_j) \big )\indicator{Q^1_j}(x).\]
Note that $g^1$ has support off $\Omega^1$, and clearly for a.e. $\! x \in Q_0$,
\[|g^1(x)| \leq 2\inf_{y \in Q_0}M^{\sharp}_{0,s,Q_0}f(y).\]

Now focus on the second sum. Since $f^0(x) - m_{f^0}(t,Q) = f(x) - m_f(t,Q)$ for all cubes $Q$ and functions $f$ supported in $Q$, this sum equals
\[\sum_j \big( f(x) - m_f(t,Q^1_j) \big)\indicator{Q^1_j}(x).\]
The idea is to repeat the above argument for each function $f^1_j = \big( f - m_f(t,Q^1_j) \big)\indicator{Q^1_j}$, and so on.

We now describe the iteration. Assuming that $\{Q^{k-1}_j\}$ are the dyadic cubes corresponding to the $(k-1)$st generation of subcubes of $Q_0$ obtained as above, let
\[f_j^{k-1} = \big( f - m_f(t,Q^{k-1}_j) \big)\indicator{Q^{k-1}_j},\]
and
\[E^k_j = \big\{x \in Q^{k-1}_j: f^{k-1}_j(x) > 2\inf_{y \in Q^{k-1}_j}M^{\sharp}_{0,s,Q^{k-1}_j}f(y)\big\}.\]

If $|E^k_j| = 0$,  we write $s^k_j = f^{k-1}_j$ which satisfies, as in (1.1),
\begin{equation}
|s^k_j(x)| \leq 2\inf_{y \in Q^{k-1}_j}M^{\sharp}_{0,s,Q^{k-1}_j}f(y)
\end{equation}
for a.e. $\! x \in Q^{k-1}_j$. These are the ``$s$'' functions since the decomposition ``stops'' at $Q^{k-1}_j$; clearly $s^k_j$ has its support on $Q^{k-1}_j$, and $Q^{k-1}_j$ contains no further subcubes of the decomposition.

If $|E^k_j| > 0$, we define
\[\Omega^k_j = \big\{x \in Q^{k-1}_j: m^{t,\Delta}_{Q^{k-1}_j}f^{k-1}_j(x) > 2\inf_{y \in Q^{k-1}_j}M^{\sharp}_{0,s,Q^{k-1}_j}f(y)\big\} \supset E^k_j.\]
Note that the $Q^{k-1}_j$, and thus the $\Omega^k_j$, are nonoverlapping. Then $|\Omega^k_j| > 0$ as well, and
\[\Omega^k_j = \bigcup_i Q^k_i,\]
where the $Q^k_i$'s are nonoverlapping maximal dyadic subcubes of $Q^{k-1}_j$ such that
\begin{align}
|m_{f^{k-1}_j}(t,Q^k_i)| &> 2\inf_{y \in Q^{k-1}_j}M^{\sharp}_{0,s,Q^{k-1}_j}f(y), \quad {\text{and}}
\notag
\\
|m_{f^{k-1}_j}(t,\widehat{Q^k_i)}| &\leq 2\inf_{y \in Q^{k-1}_j}M^{\sharp}_{0,s,Q^{k-1}_j}f(y).
\end{align}
Then define
\[\Omega^k = \bigcup_j \Omega^k_j.\]

Let $\alpha^{k,j}_i = m_{f^{k-1}_j}(t,Q^k_i)$, and note that by (1.7) and (1.10)
\begin{align}
|\alpha^{k,j}_i| &\leq \big|m_{f^{k-1}_j}(t,Q^k_i) - m_{f^{k-1}_j}(t,\widehat{Q^k_i})\big| + \big|m_{f^{k-1}_j}(t,\widehat{Q^k_i})\big|
\notag
\\
&\leq 10n \inf_{y \in Q^k_i} M^{\sharp}_{0,s,\widehat{Q^k_i}}f(y) + 2\inf_{y \in Q^{k-1}_j}M^{\sharp}_{0,s,Q^{k-1}_j}f(y).
\end{align}

We then have
\[f^{k-1}_j(x) = g_j^k(x) + \sum_i \alpha^{k,j}_i\indicator{Q^k_i}(x) + \sum_i \big(f(x) - m_f(t,Q^k_i) \big)\indicator{Q^k_i}(x),\]
for a.e. $\! x \in Q^{k-1}_j$, where $g^k_j = f^{k-1}_j \indicator{Q^{k-1}_j \setminus \Omega^k_j}$ is readily seen to satisfy
\begin{equation}
|g^k_j(x)| \leq 2 \inf_{y \in Q^{k-1}_j}M^{\sharp}_{0,s,Q^{k-1}_j}f(y)
\end{equation}
for a.e. $\! x \in Q^{k-1}_j$. These are the ``$g$'' functions since the decomposition ``goes on'' or continues, into $Q^{k-1}_j$; $g^k_j$ has support on $Q^{k-1}_j$ away from $\Omega^k_j$, which are the next subcubes in the decomposition.

We separate the $Q^{k-1}_j$ into two families. One family, indexed by $I^k_1$, contains those cubes where the decomposition stops, and the other, indexed by $I^k_2$, where it continues. Specifically, let
\[I^k_1 = \{j : \Omega^k \cap Q^{k-1}_j = \emptyset\}, \quad I^k_2 = \{j: \Omega^k \cap Q^{k-1}_j \neq \emptyset\}.\]
Now we group the $Q^k_i$ based on which $Q^{k-1}_j$ contains them:  if $j \in I^k_2$, let
\[J_{j}^k = \{i : Q^k_i \subset Q^{k-1}_j\}.\]
These definitions 
give that
\[\Omega_{j}^k = \bigcup_{i \in J_{j}^k}Q^k_i.\]

Note that, as in (1.6),
\[|\Omega^k_j \cap Q^{k-1}_j| = \sum_{i \in J_{j}^k}|Q^k_i| \leq \frac{s}{1-t}|Q^{k-1}_j|\]
so that
\begin{align}
|\Omega^k| &= \sum_j |\Omega^k_j \cap Q^{k-1}_j| \leq \Big( \frac{s}{1-t} \Big)\sum_j |Q^{k-1}_j|
\notag
\\
&= \Big( \frac{s}{1-t} \Big)|\Omega^{k-1}| \leq \Big( \frac{s}{1-t} \Big)^k|Q_0|.
\end{align}

The $k$th iteration of the local median oscillation decomposition of the function $f$ is as follows: for a.e. $\! x \in Q_0$,

\[ f(x) - m_f(t,Q_0) = \sum_{v = 1}^k \Big(\sum_{j \in I^v_1}s^v_j + \sum_{j \in I^v_2} g^v_j \Big) + \sum_{v = 1}^k \sum_{j \in I^v_2} \sum_{i \in J_{j}^v}\alpha^{v,j}_i \indicator{Q^v_i}(x) + \psi^k(x),\]
where
\[\psi^k = \sum_{j \in I^k_2} \sum_{i \in J_{j}^k} \big(f - m_f(t,Q^k_i) \big )\indicator{Q^k_i}.\]

Since $\psi^k$ is supported in $\Omega^k$, by (1.13) it readily follows that $\psi^k \to 0$ a.e.\! in $Q_0$ as $k \to \infty$, and therefore
\begin{align*}
f(x) - m_f(t,Q_0) &= \sum_{v = 1}^{\infty} \Big( \sum_{j \in I^v_1}s^v_j + \sum_{j \in I^v_2} g^v_j \Big) + \sum_{v = 1}^{\infty}\sum_{j \in I^v_2} \sum_{i \in J^v_j} \alpha^{v,j}_i \indicator{Q^v_j}(x)
\\
&=S_1(x) + S_2(x),
\end{align*}
say.

In order to bound $|f(x) - m_f(t,Q_0)|$, consider first $S_1$. Of course, for all $v$ and $j$ the $s^v_j$'s have nonoverlapping support. This is also true for the $g^v_j$'s. Furthermore, the support of any $g^v_j$ is nonoverlapping with that of any $s^v_j$. So for every $v$, $j$, and a.e. $\! x \in Q_0$, by (1.9) and (1.12)
\begin{align}
\Big| \sum_{v=1}^{\infty} \Big(\sum_{j \in I^v_1}s^v_j + \sum_{j \in I^v_2} g^v_j \Big) \Big| &\leq \max \Big\{ \sup_{j \in I^v_1} \big\| f^{v-1}_j  \big\|_{L^{\infty}}, \; \sup_{j \in I^v_2} \big\| f^{v-1}_j\indicator{Q^{v-1}_j \setminus \Omega_{j}^v} \big\|_{L^{\infty}} \Big\}
\notag
\\
&\leq \max \Big\{ \sup_{j \in I^v_1} \Big(2\inf_{y \in Q^{v-1}_j} M^{\sharp}_{0,s,Q^{v-1}_j}f(y) \Big),
\notag
\\
&\qquad \qquad \qquad \qquad\quad \sup_{j \in I^v_2} \Big(2\inf_{y \in Q^{v-1}_j}M^{\sharp}_{0,s,Q^{v-1}_j}f(y) \Big) \Big\}
\notag
\\
&\leq 2\,M^{\sharp}_{0,s,Q_0}f(x).
\end{align}

We consider $S_2$ next. The summand for $v = 1$ is distinguished, so we deal with it separately. By (1.8) above,
\begin{align}
\Big |\sum_j \alpha_j^1 \indicator{Q^1_j}(x) \Big | &\leq \sum_j |\alpha_j^1| \indicator{Q^1_j}(x)
\notag
\\
&\leq \sum_j \Big (10n\inf_{y \in Q^1_j}M^{\sharp}_{0,s,\widehat{Q^1_j}}f(y) + 2\inf_{y \in Q_0}M^{\sharp}_{0,s,Q_0}f(y) \Big )\indicator{Q^1_j}(x)
\notag
\\
&\leq \sum_j \Big(10n \inf_{y \in Q^1_j}M^{\sharp}_{0,s,\widehat{Q^1_j}}f(y) \Big)\indicator{Q^1_j}(x) + 2\inf_{y \in Q_0}M^{\sharp}_{0,s,Q_0}f(y).
\end{align}

As for the other terms of the sum, by (1.11) we have
\begin{align}
\Big |\sum_{v = 2}^{\infty} &\sum_{j \in I^v_2} \sum_{i \in J^v_j}\alpha^{v,j}_i \indicator{Q^v_i}(x) \Big | \leq \sum_{v = 2}^{\infty} \sum_{j \in I^v_2} \sum_{i \in J_{j}^v}|\alpha^{v,j}_i| \indicator{Q^v_i}(x)
\notag
\\
&\leq \sum_{v = 2}^{\infty} \sum_{j \in I^v_2} \sum_{i \in J_{j}^v}\Big (10n \inf_{y \in Q^v_i} M^{\sharp}_{0,s,\widehat{Q^v_i}}f(y) + 2\inf_{y \in Q^{v-1}_j}M^{\sharp}_{0,s,Q^{v-1}_j}f(y) \Big )\indicator{Q^v_i}(x)
\notag
\\
&\leq \sum_{v = 2}^{\infty} \sum_{j \in I^v_2}\sum_{i \in J^v_j} \Big(10n \inf_{y \in Q^v_i}M^{\sharp}_{0,s,\widehat{Q^v_i}}f(y)
\Big )\indicator{Q^v_i}(x) \notag
\\
&\qquad\qquad\qquad +\sum_{v = 2}^{\infty}\sum_{j \in I^v_2} \Big(2\inf_{y \in Q^{v-1}_j}M^{\sharp}_{0,s,Q^{v-1}_j}f(y) \Big)\indicator{Q^{v-1}_j}(x).
\end{align}

We combine (1.15) and (1.16) and note that since the sum is infinite and the families $I^v_2$ are nested,
\begin{align}
\Big |\sum_{v = 1}^{\infty} &\sum_{j \in I^v_2} \sum_{i \in J^v_j}\alpha^{v,j}_i \indicator{Q^v_i}(x) \Big | \leq \sum_{v=1}^{\infty}\sum_{j \in I^v_2} \sum_{i \in J^v_j} \Big(10n\inf_{y \in Q^v_i}M^{\sharp}_{0,s,\widehat{Q^v_i}}f(y) \Big) \indicator{Q^v_i}(x)
\notag
\\
&\qquad\qquad\qquad\qquad\qquad\qquad + \sum_{v=1}^{\infty}\sum_{j \in I^v_2}\Big(2\inf_{y \in Q^{v-1}_j}M^{\sharp}_{0,s,Q^{v-1}_j}f(y) \Big)\indicator{Q^{v-1}_j}(x)
\notag
\\
&\leq \sum_{v=1}^{\infty}\sum_{j \in I^v_2} \Big(10n\inf_{y \in Q^v_j}M^{\sharp}_{0,s,\widehat{Q^v_j}}f(y) + 2 \inf_{y \in Q^v_j}M^{\sharp}_{0,s,Q^v_j}f(y)\Big) \indicator{Q^v_j}(x)
\notag
\\
&\qquad\qquad + 2\inf_{y \in Q_0}M^{\sharp}_{0,s,Q_0}f(y).
\end{align}

Combining (1.14) and (1.17), finally we get that for a.e. $\! x \in Q_0$,
\begin{align*}
|f(x) - &m_f(t,Q_0)|  \leq 4\, M^{\sharp}_{0,s,Q_0}f(x)
\\
&  \qquad + \sum_{v=1}^{\infty}\sum_{j \in I^v_2} \Big(10n\inf_{y \in Q^v_j}M^{\sharp}_{0,s,\widehat{Q^v_j}}f(y) + 2 \inf_{y \in Q^v_j}M^{\sharp}_{0,s,Q^v_j}f(y)\Big) \indicator{Q^v_j}(x).
\end{align*}
Thus the conclusion holds.
\end{proof}

\section{Applications}
Theorem 2.1  displays how the first decomposition leads to the
results   in \cite{Lerner2010} and \cite{LernerSummary}. Its proof is similar in spirit to Lerner's. In what follows, $M$ is the Hardy-Littlewood maximal function.

\begin{theorem}
For any weight $w$, $0 < s < 1/2$, $1/2 \leq t < 1-s$, $0 < \delta \leq 1$, and $f$ measurable on a cube $Q_0 \subset \mathbb{R}^n$,
\begin{align*}
\int_{Q_0}|f(x) - &m_f(t,Q_0)|\,w(x)\,dx
\\
&\leq c\int_{\mathbb{R}^n}\Big (M^{\sharp}_{0,s,Q_0}f(x) \Big )^{\delta}M\Big (\Big (M^{\sharp}_{0,s,Q_0}f(\cdot) \Big )^{1-\delta} w(\cdot) \Big)(x)\,dx,
\end{align*}
where $c = c_{n,s,t}$. If $f$ is measurable on $\mathbb{R}^n$ such that $m_f(t,Q_0) \to 0$ as $Q_0 \to \mathbb{R}^n$, then
\[\int_{\mathbb{R}^n}|f(x)|\,w(x)\,dx \leq c\int_{\mathbb{R}^n}\Big (M^{\sharp}_{0,s}f(x) \Big )^{\delta}M\Big (\Big (M^{\sharp}_{0,s}f(\cdot) \Big )^{1-\delta} w(\cdot) \Big )(x)\,dx.\]
\end{theorem}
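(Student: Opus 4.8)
The plan is to feed the pointwise decomposition of Theorem 1.1 into the integral, estimate the two resulting pieces separately, and then pass to the limit over an exhausting sequence of cubes for the global statement. Abbreviate $h = M^{\sharp}_{0,s,Q_0}f$ and $\lambda = s/(1-t)$, so $0 \le \lambda < 1$ by the hypothesis $t<1-s$. First I would rewrite the pointwise bound: each cube $Q^v_j$ occurring in Theorem 1.1 (i) is a proper dyadic subcube of $Q_0$, hence $\widehat{Q^v_j} \subseteq Q_0$, so $M^{\sharp}_{0,s,\widehat{Q^v_j}}f \le h$ on $\widehat{Q^v_j}$ and $M^{\sharp}_{0,s,Q^v_j}f \le h$ on $Q^v_j$; thus both infima in (i) are at most $a^v_j := \inf_{Q^v_j} h$, and (i) gives, for a.e. $x$,
\[|f(x) - m_f(t,Q_0)| \le 4h(x) + (10n+2)\sum_{v}\sum_{j \in I^v_2} a^v_j\,\indicator{Q^v_j}(x).\]
Multiplying by $w$ and integrating over $Q_0$ (Tonelli, all terms nonnegative) reduces the first assertion to the two estimates $\int_{Q_0} h\,w \le \int_{\mathbb{R}^n} h^{\delta}M(h^{1-\delta}w)$ and $\sum_v\sum_{j\in I^v_2} a^v_j\,w(Q^v_j) \le (1-\lambda)^{-1}\int_{\mathbb{R}^n} h^{\delta}M(h^{1-\delta}w)$. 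The first is immediate, since $h^{1-\delta}w \le M(h^{1-\delta}w)$ a.e. (trivially where $h^{1-\delta}w$ is not locally integrable, and by the Lebesgue differentiation theorem otherwise), whence $hw = h^{\delta}\cdot h^{1-\delta}w \le h^{\delta}M(h^{1-\delta}w)$ a.e.

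The heart of the matter is the second estimate, where properties (ii)--(iv) furnish a sparse family. Set $E^v_j = Q^v_j \setminus \Omega^{v+1}$. Property (iv) gives $|E^v_j| \ge (1-\lambda)|Q^v_j|$, and (ii)--(iii) make the sets $\{E^v_j\}_{v,j}$ pairwise disjoint: for fixed $v$ the $Q^v_j$ are nonoverlapping, while for $v<v'$ one has $E^{v'}_{j'} \subseteq Q^{v'}_{j'} \subseteq \Omega^{v'} \subseteq \Omega^{v+1}$, which is disjoint from $E^v_j$. Since $u\mapsto u^{1-\delta}$ is nondecreasing on $[0,\infty)$, for $x\in Q^v_j$ we have $(a^v_j)^{1-\delta}\le h(x)^{1-\delta}$, so
\[(a^v_j)^{1-\delta}\,\frac{w(Q^v_j)}{|Q^v_j|} \le \frac{1}{|Q^v_j|}\int_{Q^v_j} h^{1-\delta}w \le \inf_{z\in E^v_j} M(h^{1-\delta}w)(z).\]
Multiplying through by $(a^v_j)^{\delta}|Q^v_j|$, using $|Q^v_j|\le(1-\lambda)^{-1}|E^v_j|$ and then $(a^v_j)^{\delta}\le h(x)^{\delta}$ together with $\inf_{E^v_j}M(h^{1-\delta}w)\le M(h^{1-\delta}w)(x)$ for $x\in E^v_j$ gives $a^v_j\,w(Q^v_j) \le (1-\lambda)^{-1}\int_{E^v_j} h^{\delta}M(h^{1-\delta}w)$. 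Summing over $v$ and $j$ and invoking the disjointness of the $E^v_j\subseteq Q_0$ yields the second estimate, and hence the first assertion with $c_{n,s,t} = 4 + (10n+2)(1-t)/(1-t-s)$.

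For the global statement, apply what has been proved with $Q_0$ replaced by $Q_0^{(k)} = [-k,k]^n$ and let $k\to\infty$. On the left, $\indicator{Q_0^{(k)}}\,|f - m_f(t,Q_0^{(k)})|\,w \to |f|\,w$ pointwise (by hypothesis $m_f(t,Q_0^{(k)})\to 0$), so Fatou gives $\int_{\mathbb{R}^n}|f|w \le \liminf_k \int_{Q_0^{(k)}}|f - m_f(t,Q_0^{(k)})|w$. On the right, $M^{\sharp}_{0,s,Q_0^{(k)}}f \uparrow M^{\sharp}_{0,s}f$ pointwise, so the integrand $(M^{\sharp}_{0,s,Q_0^{(k)}}f)^{\delta}M((M^{\sharp}_{0,s,Q_0^{(k)}}f)^{1-\delta}w)$ increases to $(M^{\sharp}_{0,s}f)^{\delta}M((M^{\sharp}_{0,s}f)^{1-\delta}w)$ — using monotone convergence inside $M$ as well — and monotone convergence finishes the proof.

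I expect the main obstacle to be precisely the passage from the sum $\sum a^v_j\,w(Q^v_j)$ to a single integral of $h^{\delta}M(h^{1-\delta}w)$: one must recognize that (ii)--(iv) encode a sparseness (Carleson) condition, extract the disjoint ``shadows'' $E^v_j$ carrying a fixed fraction of each $|Q^v_j|$, and then carry out the splitting $a^v_j = (a^v_j)^{\delta}(a^v_j)^{1-\delta}$ that sends $h^{1-\delta}$ into the average against $w$ and $h^{\delta}$ against the maximal function. The bound on the first term and the limiting argument are routine.
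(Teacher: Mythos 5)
Your proof is correct and follows essentially the same route as the paper: feed the Theorem 1.1 decomposition into the integral, bound the first term via $hw \le h^{\delta}M(h^{1-\delta}w)$, and handle the sum by extracting the pairwise disjoint sets $Q^v_j\setminus\Omega^{v+1}$ of proportional measure and splitting $a^v_j=(a^v_j)^{\delta}(a^v_j)^{1-\delta}$, exactly as the paper does with its sets $F^v_j$. The only difference is cosmetic: you also write out the exhaustion/limit argument for the global statement, which the paper leaves implicit.
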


\begin{proof}
Applying Theorem 1.1, we have
\begin{align*}
\int_{Q_0}|&f(x) - m_f(t,Q_0)|\,w(x)\,dx \leq 4\int_{Q_0}M^{\sharp}_{0,s,Q_0}f(x) \, w(x)\,dx
\\
&  \quad +\sum_{v=1}^{\infty}\sum_{j \in I^v_2} \Big(10n\inf_{y \in Q^v_j}M^{\sharp}_{0,s,\widehat{Q^v_j}}f(y) + 2 \inf_{y \in Q^v_j}M^{\sharp}_{0,s,Q^v_j}f(y) \Big)\int_{Q^v_j}w(x)\,dx.
\end{align*}
Now it easily follows that
\begin{align*}
\int_{Q_0}M^{\sharp}_{0,s,Q_0}f(x) & \, w(x)\,dx 
 \\
&\le\int_{\mathbb{R}^n} \Big (M^{\sharp}_{0,s,Q_0}f(x) \Big )^{\delta} M \Big ( \Big (M^{\sharp}_{0,s,Q_0}f(\cdot) \Big )^{1-\delta}w(\cdot) \Big )(x)\,dx.
\end{align*}

By construction, the $Q^v_j$'s are nonoverlapping over fixed $v$, but not over all $v$; indeed, each $Q^v_j$ is a subcube of some $Q^{v-1}_i$. So we define $F^v_j = Q^v_j \setminus \Omega^{v+1}$, which are pairwise disjoint over all $v$ and $j$. And since
\[|\Omega^{v+1} \cap Q^v_j| \leq \frac{s}{1-t}|Q^v_j|,\]
we know that
\[|F^v_j| \geq \Big (1 - \frac{s}{1-t} \Big )|Q^v_j| = c_{s,t}|Q^v_j|.\]
We can then compute
\begin{align*}
\inf_{y \in Q^v_j} M^{\sharp}_{0,s,\widehat{Q^v_j}}f(y) \int_{Q^v_j} & w(x)\,dx\\
& \le c_{s,t}\frac{|F^v_j|}{|Q^v_j|} \inf_{y \in Q^v_j}M^{\sharp}_{0,s,Q_0}f(y) \int_{Q^v_j}w(x)\,dx
\\
&= c_{s,t} \Big (\inf_{y \in Q^v_j}M^{\sharp}_{0,s,Q_0}f(y)\Big )^{\delta} |F^v_j| \frac{1}{|Q^v_j|}\int_{Q^v_j}\Big (M^{\sharp}_{0,s,Q_0}f(x) \Big )^{1-\delta}w(x)\,dx
\\
&\leq c_{s,t} \Big( \int_{F^v_j} \left (M^{\sharp}_{0,s,Q_0}f(x) \right )^{\delta}dx \Big) \inf_{y \in F^v_j}M \Big( \Big
 (M^{\sharp}_{0,s,Q_0}f(\cdot) \Big)^{1-\delta}w(\cdot) \Big)(y).
\end{align*}
Summing, we see that
\begin{align*}
\sum_{v=1}^{\infty} \sum_{j \in I^v_2} \inf_{y \in Q^v_j}&M^{\sharp}_{0,s,\widehat{Q^v_j}}f(y) \int_{Q^v_j}w(x)dx
\\
&\le c_{s,t} \int_{\mathbb{R}^n} \left (M^{\sharp}_{0,s,Q_0}f(x) \right )^{\delta} M \left ( \left (M^{\sharp}_{0,s,Q_0}f(\cdot) \right )^{1-\delta}w(\cdot) \right )(x)\,dx.
\end{align*}

We bound $\sum_{v=1}^{\infty}\sum_{j \in I^v_2} \inf_{y \in Q^v_j}M^{\sharp}_{0,s,Q^v_j}f(y)$ similarly.
\end{proof}


\section{Revisiting $M^{\sharp}_{0,s}(Tf)(x) \leq c M f(x)$}

Lerner's application of Theorem 1.2 is to Calder\'{o}n-Zygmund singular integral operators $T$ via the inequality $M^{\sharp}_{0,s}(Tf)(x) \leq c\,Mf(x)$. We prove here a local version of this estimate.
$M^{\sharp}$ denotes the sharp maximal function.

\begin{theorem}
 Let $T$ be a Calder\'{o}n-Zygmund singular integral operator defined
by  \[Tf(x) = {\rm{p.v.}} \int_{\mathbb{R}^n} k(x,y)f(y)dy\]
such that
\begin{enumerate}
\item[\rm (1)]
for some $C, \delta > 0$, 
$k(x,y)$ satisfies
\[ | k(x,y) - k(x',y)| \leq C\frac{|x - x'|^{\delta}}{|x - y|^{n+\delta}}\]
     whenever $x,x'\in Q$ and $y\in (2Q)^c$ for some cube $Q$, and
\item[\rm (2)]
$T$ is of weak-type (1,1).
\end{enumerate}
Then for  $0 < s \leq 1/2$,  any cube $Q_0$,  and $x\in Q_0$,
\[M^{\sharp}_{0,s,Q_0}(Tf)(x) \leq c \sup_{x \in Q,\, Q \subset Q_0} \inf_{y \in Q}Mf(y).\]

Moreover, if we also have that $T(1)=0$,
then
\[M^{\sharp}_{0,s,Q_0}(Tf)(x) \leq c \sup_{x \in Q, Q \subset Q_0} \inf _{y \in Q} M^{\sharp} f(y).\]

In particular, if $Q_0 = \mathbb{R}^n$, then $M^{\sharp}_{0,s}(Tf)(x) \leq c Mf(x)$ and
 $M^{\sharp}_{0,s}(Tf)(x)$ $ \leq c M^{\sharp}f(x)$,
 respectively.
\end{theorem}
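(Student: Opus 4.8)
The plan is to reduce everything to a single-cube estimate. Fix a cube $Q$ with $x\in Q\subset Q_0$, and consider the inner quantity $\omega(Q):=\inf_c\,\inf\{\alpha\ge 0:\ |\{y\in Q:\ |Tf(y)-c|>\alpha\}|<s|Q|\}$; one such quantity is attached to each admissible $Q$, and $M^{\sharp}_{0,s,Q_0}(Tf)(x)$ is precisely the supremum of these over $Q\ni x$, $Q\subset Q_0$. So it suffices to show $\omega(Q)\le c\,\inf_{y\in Q}Mf(y)$ in general, and $\omega(Q)\le c\,\inf_{y\in Q}M^{\sharp}f(y)$ when $T(1)=0$; taking the supremum over $Q$ then gives the two displayed inequalities. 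The ``$Q_0=\mathbb{R}^n$'' statements are then immediate, since the constraint $Q\subset Q_0$ becomes vacuous and $x\in Q$ forces $\inf_{y\in Q}Mf(y)\le Mf(x)$ and $\inf_{y\in Q}M^{\sharp}f(y)\le M^{\sharp}f(x)$. We may assume $f\in L^1_{\mathrm{loc}}$, since otherwise the relevant right-hand sides are already infinite on the set in question.

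To estimate $\omega(Q)$, I would split $f=f_1+f_2$ relative to $2Q$, with $f_1=f\indicator{2Q}$, $f_2=f\indicator{(2Q)^c}$; in the case $T(1)=0$ I first subtract the constant $f_{2Q}$ (which does not change $Tf$) and take $f_1=(f-f_{2Q})\indicator{2Q}$, $f_2=(f-f_{2Q})\indicator{(2Q)^c}$. For $Tf_1$ the weak-type $(1,1)$ hypothesis gives $|\{y\in Q:\ |Tf_1(y)|>\lambda\}|\le(c/\lambda)\int_{2Q}|f_1|$, and since $2Q$ is a cube containing every point of $Q$ one has $|2Q|^{-1}\int_{2Q}|f|\le\inf_{y\in Q}Mf(y)$ (respectively $|2Q|^{-1}\int_{2Q}|f-f_{2Q}|\le\inf_{y\in Q}M^{\sharp}f(y)$, $2Q$ being an admissible cube in the definition of $M^{\sharp}$ at any point of $Q$); hence this level set is $<s|Q|$ as soon as $\lambda\ge c_{n,s}\inf_{y\in Q}Mf(y)$ (resp.\ $M^{\sharp}f$). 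For $Tf_2$ I would invoke the kernel regularity (1) with the cube $Q$: for a.e.\ $y,y'\in Q$,
\[
|Tf_2(y)-Tf_2(y')|\le C\,|y-y'|^{\delta}\int_{(2Q)^c}\frac{|f_2(z)|}{|y-z|^{n+\delta}}\,dz,
\]
and then break $(2Q)^c$ into the dyadic annuli $2^{k+1}Q\setminus 2^kQ$, $k\ge1$, where $|y-y'|\le\sqrt n\,\ell(Q)$ and $|y-z|\gtrsim 2^k\ell(Q)$, so that the right-hand side is dominated by a convergent series times $\inf_{y\in Q}Mf(y)$ (resp.\ $\inf_{y\in Q}M^{\sharp}f(y)$). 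Thus $Tf_2$ is constant on $Q$ up to an error $c_n\inf_{y\in Q}Mf(y)$ (resp.\ $M^{\sharp}f$). Choosing the free constant $c$ to be a finite value of $Tf_2$ at a point of $Q$ — such a point exists a.e.\ because $Tf_2=Tf-Tf_1$ and $Tf_1$ is finite a.e.\ by weak-type $(1,1)$ — and combining the two estimates through the inclusion $\{|Tf-c|>\alpha\}\subset\{|Tf_1|>\alpha/2\}\cup\{|Tf_2-c|>\alpha/2\}$, one obtains $\omega(Q)\le c\,\inf_{y\in Q}Mf(y)$ (resp.\ $M^{\sharp}f$), as required.

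The main obstacle is the tail estimate for $Tf_2$. In the general case the annular sum only involves $\int_{2^{k+1}Q}|f|\le|2^{k+1}Q|\inf_{y\in Q}Mf(y)$ against the geometric factor $2^{-k\delta}$, which is routine. In the case $T(1)=0$ one must instead bound $\int_{2^{k+1}Q}|f-f_{2Q}|$, and here the comparison of averages $|f_{2^{k+1}Q}-f_{2Q}|$ grows linearly in $k$ — telescoping through the intermediate dyadic dilates, each step costing $c_n\inf_{y\in Q}M^{\sharp}f(y)$ — so the annular integral carries a factor $(k+1)$; the point is that this logarithmic-type growth is exactly absorbed by $2^{-k\delta}$, since $\sum_k(k+1)2^{-k\delta}<\infty$. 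A secondary, purely technical matter is the justification (via regularization/density, as in the cited literature) that $Tf_2$ may be represented pointwise a.e.\ on $Q$ by an absolutely convergent integral of kernel differences, which is what legitimizes the choice of the constant $c$ and the oscillation bound for $Tf_2$.
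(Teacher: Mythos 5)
Your proposal is correct and takes essentially the same approach as the paper: split $f$ (minus a fixed constant when $T(1)=0$) across $2Q$, control the near part $Tf_1$ on $Q$ by the weak-type $(1,1)$ hypothesis, control the oscillation of the far part $Tf_2$ on $Q$ by the H\"ormander-type kernel condition together with a telescoping estimate over the dyadic dilates $2^kQ$ (the factor $(k+1)2^{-k\delta}$ being summable), and then combine the two level-set estimates. The only differences are cosmetic: in the $T(1)=0$ case the paper subtracts the median $m_f(t,Q)$ and invokes median--mean comparison lemmas, whereas you subtract the mean $f_{2Q}$, and you take the reference constant to be a value of $Tf_2$ at an a.e.\ point of $Q$ instead of $Tf_2(x_Q)$.
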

\begin{proof}

We prove the case when $T(1) =0$. Fix a cube $Q_0 \subset \mathbb{R}^n$  and take $x \in Q_0$. Let $Q \subset Q_0$ be a cube containing $x$ with center $x_Q$ and sidelength $l_Q$. Let $1/2 \leq t \leq 1-s$,  $f_1 = (f - m_f(t,Q)) \indicator{2Q}$, and $f_2 = (f - m_f(t,Q))\indicator{(2Q)^c}$.
Then by the linearity of $T$, $Tf(z)- Tf_2(x_Q)= Tf_1(z)+ Tf_2(z)-Tf_2(x_Q)$ for $z\in Q$.


We claim that
\begin{equation}
\|Tf_2 - Tf_2(x_Q)\|_{L^{\infty}(Q)} \leq c_{2} \inf_{y \in Q}M^{\sharp}f(y),
\end{equation}
and
\begin{equation}
|\{z \in Q: |Tf_1(z)| > \lambda\}| < s\,|Q|
\end{equation}
for $\lambda = c \inf_{y \in Q}M^{\sharp}f(y)$, where $c$ is appropriately chosen.

We prove (3.1) first. For any $z \in Q$,
\begin{align}
|Tf_2(z) - Tf_2(x_Q)| &\leq \int_{(2Q)^c}|f(y) - m_f(t,Q)||k(z,y) - k(x_Q,y)|dy
\notag
\\
&\leq c_{n,\delta} l_Q^{\delta} \int_{(2Q)^c} \frac{|f(y) - m_f(t,Q)|}{|z - y|^{n+\delta}}dy
\notag
\\
&\leq c_{n,\delta} l_Q^{\delta} \sum_{m=1}^{\infty} \int_{2^mQ} \frac{|f(y) - m_f(t,Q)|}{|z - y|^{n+\delta}} dy
\notag
\\
&\leq c_{n,\delta}l_Q^{\delta} \sum_{m=1}^{\infty} \frac{1}{(2^m l_Q)^{n+\delta}} \int_{2^mQ} |f(y) - m_f(t,Q)|dy
\notag
\\
&\leq c_{n,\delta} \sum_{m=1}^{\infty} \frac{1}{2^{m\delta}}\frac{1}{|2^{m}Q|} \int_{2^mQ} |f(y) - m_f(t,Q)|dy.
\end{align}

It readily follows from Proposition 1.1 in \cite{MedContOsc} that for any cube $Q$,
 \begin{align}
\frac{1}{|Q|}\int_{Q} |f(y) - m_f(t,Q)|\, dy &\le
\frac{1}{|Q|}\int_{Q} |f(y) -  f_Q|\, dy + |f_Q- m_f(t,Q)|\notag
\\
&\le 2\inf_{y \in Q}M^{\sharp}f(y) + \frac{1}{s}\frac{1}{|Q|}\int_{Q} |f(y) -  f_Q|\, dy\notag
\\
&\le c_s  \inf_{y \in Q}M^{\sharp}f(y) .
\end{align}

Also, for all $j$, by (3.4),
\begin{align}
|m_f(t,2^j Q) - m_f(t,2^{j-1} Q)| &\leq
 m_{|f-m_f(t,2^{j} Q)|} (t,2^{j-1} Q)
\notag
\\
& \leq  \frac{1}{s} \frac{1}{|2^{j-1}Q|} \int_{2^j Q} |f(y) - m_f(t,2^j Q)|\, dy\notag\\
&\leq c_{n,s} \inf_{y \in Q}M^{\sharp}f(y).
\end{align}

Then (3.4) and (3.5) give that
\begin{align}
\int_{2^m Q} |f(y) - m_f(t,Q)|dy &\leq \int_{2^m Q} |f(y) - m_f(t,2^m Q)|\, dy
\notag
\\
&\qquad\quad + \int_{2^m Q} \sum_{j=1}^m |m_f(t,2^j Q) - m_f(t, 2^{j-1}Q)|\, dy
\notag
\\
&\leq  |2^{m} Q|\, c_s \inf_{y \in Q}M^{\sharp}f(y)
\notag
\\
&\qquad\quad + |2^{m} Q|\, c_{n,s} \sum_{j=1}^m \inf_{y \in Q}M^{\sharp}f(y)
\notag
\\
&\leq |2^{m} Q|\, c_{n,s} (1+m) \inf_{y \in Q}M^{\sharp}f(y).
\end{align}

Using (3.6), we can bound (3.3) as
\begin{align*}
|Tf_2(z) - Tf_2(x_Q)| &\leq c_{n,\delta,s} \sum_{m=1}^{\infty} \frac{1}{2^{m\delta}} (1+m) \inf_{y \in Q}M^{\sharp}f(y)
\\
&
\leq c_{n,\delta,s} \inf_{y \in Q}M^{\sharp}f(y),
\end{align*}
and so
\begin{equation}
\|Tf_2 - Tf_2(x_Q)\|_{L^{\infty}(Q_0)} \leq c_{2} \inf_{y \in Q}M^{\sharp}f(y)\,.
\end{equation}

As for $(3.2)$, since $T$ is of weak-type (1,1), by (3.4), (3.5), and Lemma 4.3 in \cite{MedContOsc}
 we have that for any $\lambda > 0$,
\begin{align}
|\{z \in Q: |Tf_1(z)| > \lambda\}| &\leq \frac{C}{\lambda} \int_{2Q}|f(y) - m_f(t,Q)|dy
\notag
\\
&\leq \frac{C}{\lambda}\int_{2Q}|f(y) - m_f(t,2Q)|dy
\notag
\\
&\qquad \qquad + C
\frac{|2Q|}{\lambda} |m_f(t,Q) - m_f(t,2Q)|
\notag
\\
&\leq  c_{n,s}\frac{|Q|}{\lambda} \inf_{y \in 2Q} M^{\sharp}f(y) +  c_{n,s}' \frac{|Q|}{\lambda} \inf_{y \in Q}M^{\sharp}_{0,s}f(y)
\notag
\\
&\leq c_1 \frac{ |Q|}{\lambda} \inf_{y \in Q}M^{\sharp}f(y).
\end{align}

Choose $\lambda = c \inf_{y \in Q}M^{\sharp}f(y)$, where $c > \max \{c_2, c_1/s\}$. Then
\[ |\{z \in Q : |Tf_1(z)| > c \inf_{y \in Q}M^{\sharp}f(y)\}| < s|Q|.\]

Then (3.1) and (3.2) give 
\begin{align*}
|\{z \in Q&: |Tf(z) - Tf_2(x_Q)| > 2c \inf_{y \in Q}M^{\sharp}f(y)\}|
\\
&\le |\{z \in Q: |Tf_2(z) - Tf_2(x_Q)| > c \inf_{y \in Q}M^{\sharp}f(y)\}|
\\
&\qquad\qquad + |\{z \in Q: |Tf_1(z)| > c \inf_{y \in Q}M^{\sharp}f(y)\}|
\\
&<{s}|Q|.
\end{align*}
Whence
\[\inf_{c'} \; \inf \{\alpha \geq 0: |\{z \in Q: |Tf(z) - {c'}| > \alpha\}| < s|Q|\}\le c \inf_{y \in Q}M^{\sharp}f(y),\]
and consequently,
\[M^{\sharp}_{0,s,Q_0}Tf(x)\le c \sup_{x\in Q,\, Q\subset Q_0} \inf_{y \in Q}M^{\sharp}f(y).\]

To prove the case where $T(1) \neq 0$, let $f_1 = f\indicator{2Q}$ and $f_2 = f\indicator{(2Q)^c}$, and proceed as above.
\end{proof}


\section{Second Decomposition}
A different local median oscillation decomposition is needed for other applications, including Lerner's proof of the $A_2$ conjecture.

\begin{theorem}
Let $f$ be a measurable function on a fixed cube $Q_0 \subset \mathbb{R}^n$, $0 < s < 1/2$, and $1/2 \leq t < 1-s$. Then there exists a (possibly empty) collection of subcubes $Q^v_j \in \mathcal{D}(Q_0)$ and a family of collections of indices $\{I^v_2\}_v$ such that

\begin{enumerate}
\item[\rm (i)]
for a.e. $\! x \in Q_0$,
\begin{align*}
|&f(x) - m_f(t,Q_0)| \leq 8\,M^{\sharp}_{0,s,Q_0}f(x) +
\\
&\sum_{v=1}^{\infty}\sum_j \Big ( m_{|f - m_f(t,\widehat{Q^v_j})|}\Big (1-(1-t)/2^n,\widehat{Q^v_j} \Big ) + m_{|f - m_f(t,Q^{v}_j)|}(t,Q^{v}_j) \Big )\indicator{Q^v_j}(x);
\end{align*}
\item[\rm (ii)]
for fixed $v$, $\{Q^v_j\}$ are pairwise nonoverlapping families;
\item[\rm (iii)]
if $\Omega^v = \bigcup_j Q^v_j$, then $\Omega^{v+1} \subset \Omega^v$; and
\item[\rm (iv)]
for all $j$, $|\Omega^{v+1} \cap Q^v_j| \leq \frac{s}{1-t}|Q^v_j|$.
\end{enumerate}
\end{theorem}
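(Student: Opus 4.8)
The plan is to run the Calder\'{o}n--Zygmund iteration from the proof of Theorem 1.1, with two changes: the stopping height at each stage is taken to be a constant multiple of a median of the local oscillation of $f$ rather than $2\inf_y M^{\sharp}_{0,s,\cdot}f(y)$, and the role of Lemma 4.3 of \cite{MedContOsc} is instead played by an elementary comparison of medians on a dyadic cube and its parent. First I would record that comparison: if $Q$ is a dyadic child of $R$ then $|Q| = 2^{-n}|R|$ forces $m_g(t,Q) \le m_g\big(1-(1-t)/2^n,R\big)$ for every measurable $g$, directly from the definition of the median; taking $g = f - m_f(t,R)$ and using $|m_g(t,Q)| \le m_{|g|}(t,Q)$ (valid since $t \ge 1/2$, cf.\ (1.10) of \cite{MedContOsc}) gives
\[ |m_f(t,Q) - m_f(t,R)| \le m_{|f - m_f(t,R)|}\big(1-(1-t)/2^n,R\big), \qquad R = \widehat{Q}, \]
which will produce the first term in the sum.

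Next I would carry out the one-step decomposition. Set $f^0 = (f - m_f(t,Q_0))\indicator{Q_0}$ and $\Omega^1 = \{x \in Q_0 : m^{t,\Delta}_{Q_0}(f^0)(x) > \beta_{Q_0}\}$, where $\beta_{Q_0}$ is a constant multiple of a median of $|f^0|$ over $Q_0$ chosen large enough --- at least the $(1-s)$-median --- that $|\{|f^0| > \beta_{Q_0}\} \cap Q_0| \le s|Q_0|$. By Theorem 2.1 of \cite{MedContOsc}, $|f^0| \le \beta_{Q_0}$ a.e.\ off $\Omega^1$, and $\Omega^1 = \bigcup_j Q^1_j$ with the $Q^1_j$ the maximal dyadic subcubes on which $|m_{f^0}(t,Q^1_j)| > \beta_{Q_0}$. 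As in (1.4)--(1.6), $|m_{f^0}(t,Q^1_j)| \le m_{|f^0|}(t,Q^1_j)$ and summation give $\sum_j|Q^1_j| \le \frac{s}{1-t}|Q_0|$, which yields (iv) one generation down; and since any median of $|f^0|$ over $Q_0$ is $\le 2\inf_y M^{\sharp}_{0,s,Q_0}f(y)$, one has $\beta_{Q_0} \le c\,M^{\sharp}_{0,s,Q_0}f(x)$ on $Q_0$, which is how the global term on the right enters. The coefficient $\alpha^1_j = m_{f^0}(t,Q^1_j)$ is then split as $|m_{f^0}(t,Q^1_j) - m_{f^0}(t,\widehat{Q^1_j})| + |m_{f^0}(t,\widehat{Q^1_j})|$, with the first summand bounded by the comparison above and the second by the maximality of $Q^1_j$ (its dyadic parent fails the stopping condition).

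I would then iterate on each $f^1_j = (f - m_f(t,Q^1_j))\indicator{Q^1_j}$ to get the families $\{Q^v_j\}$; (ii) and (iii) are immediate from the dyadic construction, and (iv) from the $\sum|Q^v_j|$ estimate above. As in Theorem 1.1 the tail $\psi^k$ is supported in $\Omega^k$, whose measure decays geometrically by (iv), so $\psi^k \to 0$ a.e.\ and $f - m_f(t,Q_0) = S_1 + S_2$. The stopped/good part $S_1$ has pairwise disjoint supports with $|S_1(x)| \le \sup\{\beta_P : x \in P,\ P \text{ a stopping cube}\} \le c\,M^{\sharp}_{0,s,Q_0}f(x)$. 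In $S_2 = \sum_{v,j}\alpha^v_j\indicator{Q^v_j}$ the ``dyadic-parent'' pieces add up to $\sum_{v,j}m_{|f-m_f(t,\widehat{Q^v_j})|}\big(1-(1-t)/2^n,\widehat{Q^v_j}\big)\indicator{Q^v_j}$, while the ``decomposition-parent'' pieces $\beta_P\indicator{Q^v_j}$, grouped by $P$ using $\sum_{\text{children of }P}\indicator{Q^v_j} \le \indicator{P}$, re-index as in the passage to (1.17) to $\sum_{v,j}m_{|f-m_f(t,Q^v_j)|}(t,Q^v_j)\indicator{Q^v_j}$ plus a leftover $\beta_{Q_0} \le c\,M^{\sharp}_{0,s,Q_0}f(x)$; collecting the $M^{\sharp}$ contributions yields the constant $8$.

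The step I expect to be the main obstacle is pinning down the stopping height $\beta_P$: it must be at least the $(1-s)$-median of $|f - m_f(t,P)|$ over $P$ for the measure estimate (iv), yet the re-indexed ``decomposition-parent'' terms are required to collapse to exactly the $t$-median $m_{|f-m_f(t,Q^v_j)|}(t,Q^v_j)$ in the statement. Reconciling these --- choosing $\beta_P$ and organizing the bookkeeping so that both hold, or absorbing the gap between the $(1-s)$- and $t$-medians of the oscillation into the $M^{\sharp}_{0,s,Q_0}f(x)$ term (which is presumably why the constant is $8$ here rather than $4$) --- is the delicate point. Everything else is routine: the $\ge$ versus $>$ care in the median inequalities (as in the derivation of (1.2)) and the a.e.\ convergence of the tail.
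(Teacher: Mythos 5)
Your skeleton is the right one, and it matches the paper's: iterate the Theorem 1.1 construction with a median stopping height, control the dyadic-parent jump by the child/parent comparison $m_g(t,Q)\le m_g\big(1-(1-t)/2^n,\widehat{Q}\big)$ (this is exactly (4.4), proved by the same counting argument you give, and combined with $|m_g(t,Q)|\le m_{|g|}(t,Q)$ as in (4.5)), and obtain the constant $8$ from the bound $m_{|f-m_f(t,Q)|}(t,Q)\le 4\inf_{y\in Q}M^{\sharp}_{0,s,Q}f(y)$ (the paper's (4.1); note your ``any median of $|f^0|$ is $\le 2\inf M^{\sharp}$'' should be $4$) applied to the stopped/good parts and to the leftover $v=1$ term after reindexing. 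But the proposal does not prove the stated theorem: the point you flag yourself, the choice of the stopping height $\beta_P$, is left open, and neither of your fallbacks closes it. If $\beta_P$ is taken at least the $(1-s)$-median of $|f-m_f(t,P)|$ over $P$ (as you need for the superlevel-set bound $\le s|P|$ and hence for (iv)), then the reindexed decomposition-parent contributions are $m_{|f-m_f(t,Q^v_j)|}(1-s,Q^v_j)$, which dominate the $t$-medians appearing in (i); and the option of ``absorbing the gap into $M^{\sharp}_{0,s,Q_0}f(x)$'' fails because these terms live inside the sum over nested generations --- a fixed $x$ may lie in infinitely many cubes $Q^v_j$, so the excesses cannot be collected into a single $c\,M^{\sharp}_{0,s,Q_0}f(x)$. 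As written, your argument yields a variant of (i) with $(1-s)$-medians in the second slot, not the statement.

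For comparison, the paper takes the stopping height to be exactly the $t$-median $m_{|f-m_f(t,Q^{v-1}_j)|}(t,Q^{v-1}_j)$ of the oscillation over the decomposition parent. Then the maximality of the stopping cubes makes the reindexed parent terms come out verbatim as the second summand in (i), the first summand comes from (4.4)--(4.5), and (4.1) yields the $8$; the packing estimate, their (4.3), is asserted ``as in (1.3)/(1.6)'' and the iteration is declared to proceed as in Theorem 1.1. Your instinct that this is the delicate step is accurate: at the $t$-median height the Theorem 1.1 mechanism only gives $|\{x\in P: |f-m_f(t,P)|>\beta_P\}|\le (1-t)|P|$ rather than $\le s|P|$, so the $s/(1-t)$ packing does not follow as immediately as the paper's text suggests --- it is precisely the reconciliation you identified, which the paper settles by fiat in (4.3) and which your construction, with the larger threshold, provably cannot settle in the form (i) demands. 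So the proposal has a genuine gap at exactly this step, even though every other ingredient (the comparison lemma, the reindexing, the convergence of the tail, and the origin of the constant) agrees with the paper.
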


\begin{proof}
We follow the proof of Theorem 1.1 in form, with 
a few definitional changes. First note that for any cube $Q$,
\begin{equation}
m_{|f - m_f(t,Q)|}(t,Q) \leq  4\inf_{y \in Q}M^{\sharp}_{0,s,Q}f(y).
\end{equation}
To see this, from Proposition 1.1 and (4.3) in \cite{MedContOsc},
\begin{align*}
m_{|f - m_f(t,Q)|}(t,Q) &\leq m_{|f - m_f(1-s,Q)| + |m_f(1-s,Q) - m_f(t,Q)|}(t,Q)
\\
&\leq 2m_{|f - m_f(1-s,Q)|}(t,Q) \leq 2m_{|f - m_f(1-s,Q)|}(1-s,Q)
\\
&\leq 4\inf_{y \in Q}M^{\sharp}_{0,s,Q}f(y).
\end{align*}

We define $E^1 = \{x \in Q_0: |f(x) - m_f(t,Q_0)| > m_{|f - m_f(t,Q_0)|}(t,Q_0)\}$. If $|E^1| = 0$, the decomposition halts, just as in Theorem 1.1. So we suppose $|E^1| > 0$. We then define
\[\Omega^1 = \{x \in Q_0: m^{t,\Delta}_{Q_0}(f^0) > m_{|f - m_f(t,Q_0)|}(t,Q_0)\}.\]

Proceeding as above, we have that $\Omega^1 = \bigcup_j Q^1_j$ so that (as in (1.3))
\begin{equation}
|m_{f^0}(t,Q^1_j)| > m_{|f^0|}(t,Q_0), \quad {\text{and}}\quad |m_f(t, \widehat{Q^1_j})| \leq m_{|f^0|}(t,Q_0).
\end{equation}
Furthermore, we also have that
\begin{equation}
\sum_j |Q^1_j| \leq \frac{s}{1-t}|Q_0|.
\end{equation}

Before continuing, observe that
\begin{equation}
m_f(t,Q) \leq m_f \big(1-(1-t)/2^n,\widehat{Q} \big).
\end{equation}
To see this, note that
\begin{align*}
|\{y \in \widehat{Q}: f(y) \geq m_f(t,Q)\}| &\geq |\{y \in Q: f(y) \geq m_f(t,Q)\}|
\\
&\geq (1-t)|Q| = \frac{1-t}{2^n}\big| \widehat{Q} \big|,
\end{align*}
so taking complements in $\widehat{Q}$ we have
\[|\{y \in \widehat{Q}: f(y) < m_f(t,Q)\}| \leq \Big ( 1 - \frac{1-t}{2^n} \Big )\big|\widehat{Q}\big|.\]
Note also that by our choice of $t$, $1 - (1-t)/2^n \geq 1/2$.

Let $\alpha^1_j = m_{f^0}(t,Q^1_j)$. By (4.4) we have
\begin{align}
|\alpha^1_j| &\leq |m_{f^0}(t,Q^1_j) - m_{f^0}(t, \widehat{Q^1_j})| + |m_{f^0}(t,\widehat{Q^1_j})|
\notag
\\
&\leq m_{|f^0 - m_{f^0}(t,\widehat{Q^1_j})|}(t,Q^1_j) + m_{|f^0|}(t,Q_0)
\notag
\\
&\leq m_{|f - m_f(t,\widehat{Q^1_j})|}\Big (1-(1-t)/2^n,\widehat{Q^1_j} \Big) + m_{|f^0|}(t,Q_0).
\end{align}

This gives the first iteration of the 
decomposition of $f$ when $|E^1| > 0$: for a.e. $\! x \in Q_0$, with $g^1 = f^0 \indicator{Q_0 \setminus \Omega^1}$,
\[f^0(x) = g^1(x) + \sum_j \alpha^1_j \indicator{Q^1_j}(x) + \sum_j \big ( f^0(x) - m_{f^0}(t,Q^1_j) \big)\indicator{Q^1_j}(x).\]
Clearly by (4.1)
\[|g^1(x)| \leq m_{|f - m_f(t,Q_0)|}(t,Q_0) \leq 4\inf_{y \in Q_0}M^{\sharp}_{0,s,Q_0}f(y) \leq 4M^{\sharp}_{0,s,Q_0}f(x)\]
a.e.\! on $Q_0 \setminus \Omega^1$.

By proceeding as in the proof of Theorem 1.1, the result follows.
\end{proof}

\section{Other Applications}
     Lerner's local mean oscillation decomposition also plays a prominent role in a recent paper of Cruz-Uribe, P\'{e}rez, and Martell \cite
     {CruzUribePerez2012}.  We follow the argument in their paper and indicate
      the role of the local median oscillation decomposition of Theorem 4.1   in their proofs.


The setting is as  follows. For $Q \in \mathcal{D}$,
    let $h_Q$ be the Haar function on $Q$.
Given an integer $\tau \geq 0$, a Haar shift operator of index $\tau$ is an operator of the form
\[H_{\tau}f(x) = \sum_{Q \in \mathcal{D}} \sum_{\substack{Q', Q'' \in \mathcal{D}(Q)\\2^{-\tau n}|Q| \leq |Q'|, |Q''|}}a_{Q',Q''} \langle f, h_{Q'} \rangle h_{Q''}(x),\]
where $a_{Q',Q''}$ is a constant such that
\[ |a_{Q',Q''}| \leq C \Big ( \frac{|Q'|}{|Q|} \frac{|Q''|}{|Q|} \Big )^{1/2}.\]
Finally, for a cube $Q$, $Q^{\tau}$ is the $\tau$-th generation ancestor of $Q$, i.e. the unique dyadic cube containing $Q$ so that $|Q^{\tau}| = 2^{\tau n}|Q|$

  Lemma 4.2 in \cite{CruzUribePerez2012} is key in proving their main result. It makes use of rearrangements  and it can be rephrased in terms of
    medians as follows. $M^d$ and $M^{\sharp,d}_{0,s,Q_0}$ denote the dyadic Hardy-Littlewood maximal operator and the dyadic local sharp maximal
    operator restricted to $Q_0$, respectively.


\begin{lem}
Let $0 < s \leq 1/2$. Then for any measurable function $f$, a dyadic cube $Q_0$, and  $x \in Q_0$,
\begin{align}
\inf_c m_{|H_{\tau}f - c|}(s,Q_0) &\leq C_{\tau,n,s} \frac{1}{|Q^{\tau}_0|} \int_{Q^{\tau}_0} |f(x)| dx
\\
M^{\sharp,d}_{0,s,Q_0}(H_{\tau}f)(x) &\leq C_{\tau,n,s} M^d f(x).
\end{align}
\end{lem}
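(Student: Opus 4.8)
The plan is to estimate the dyadic local sharp maximal function of $H_\tau f$ at a point $x \in Q_0$ by fixing an arbitrary dyadic cube $Q$ with $x \in Q \subset Q_0$ and splitting $f$ according to the scale $Q^\tau$. Write $f = f_1 + f_2$ with $f_1 = f\,\indicator{Q^\tau}$ and $f_2 = f\,\indicator{(Q^\tau)^c}$. The point of the cut at the $\tau$-th ancestor (rather than at $2Q$ as in the Calderón-Zygmund case) is that the Haar shift $H_\tau$ has "range $\tau$": for any $z, z' \in Q$, the Haar functions $h_{Q'}$ and $h_{Q''}$ appearing in the sum defining $H_\tau$ that can be nonconstant on $Q$ must have $Q', Q'' \subseteq Q^\tau$ — more precisely, the contribution coming from cubes $Q$ (the outer index) that do not contain $Q^\tau$ either vanishes on $Q$ or is constant on $Q$, so $H_\tau f_2$ is constant on $Q$. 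Hence $H_\tau f_2(z) - H_\tau f_2(z')=0$ for $z,z' \in Q$, i.e. $\|H_\tau f_2 - (H_\tau f_2)(x_Q)\|_{L^\infty(Q)} = 0$, which is the exact analogue of (3.1)/(3.8) but with constant $0$ and no tail sum needed. This is the place where I expect to have to be careful: verifying cleanly that the "far" part of a Haar shift of index $\tau$ is genuinely constant on $Q$ when the outer cube in the expansion fails to contain $Q^\tau$, keeping track of which of $Q'$, $Q''$ is summed against $f$ and which carries the output.

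For the "near" part $H_\tau f_1$, I would use the $L^2$ boundedness of $H_\tau$ (which follows from the normalization $|a_{Q',Q''}| \le C(|Q'|/|Q|)^{1/2}(|Q''|/|Q|)^{1/2}$ by an almost-orthogonality/Schur-test argument, or may simply be quoted) together with Chebyshev to get the weak-type-type bound
\[
|\{z \in Q : |H_\tau f_1(z)| > \lambda\}| \le \frac{\|H_\tau f_1\|_{L^2}^2}{\lambda^2} \le \frac{C}{\lambda^2}\int_{Q^\tau}|f|^2.
\]
This is not quite what is wanted, since the right-hand side should involve an $L^1$ average over $Q^\tau$. The cleaner route is to invoke the weak-$(1,1)$ boundedness of Haar shift operators (Haar shifts are Calderón-Zygmund operators with constants depending only on $\tau$ and $n$), exactly paralleling hypothesis (2) of Theorem 3.1, to obtain
\[
|\{z \in Q : |H_\tau f_1(z)| > \lambda\}| \le \frac{C_{\tau,n}}{\lambda}\int_{Q^\tau}|f(y)|\,dy = C_{\tau,n}\,\frac{|Q^\tau|}{\lambda}\,\frac{1}{|Q^\tau|}\int_{Q^\tau}|f|.
\]
Since $|Q^\tau| = 2^{\tau n}|Q|$, choosing $\lambda = \lambda_0 := c_{\tau,n,s}\,\frac{1}{|Q^\tau|}\int_{Q^\tau}|f|$ with $c_{\tau,n,s}$ large enough forces the measure on the left to be $< s|Q|$.

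Combining the two parts as in the final display of the proof of Theorem 3.1: taking $c' = (H_\tau f_2)(x_Q)$ as the competing constant, for $z \in Q$ we have $H_\tau f(z) - c' = H_\tau f_1(z) + (H_\tau f_2(z) - c')= H_\tau f_1(z)$, so
\[
|\{z \in Q : |H_\tau f(z) - c'| > \lambda_0\}| = |\{z \in Q : |H_\tau f_1(z)| > \lambda_0\}| < s|Q|,
\]
which gives $\inf_{c'}\inf\{\alpha \ge 0 : |\{z\in Q: |H_\tau f(z)-c'|>\alpha\}| < s|Q|\} \le \lambda_0$. Taking the supremum over $Q \ni x$, $Q \subseteq Q_0$ dyadic yields
\[
M^{\sharp,d}_{0,s,Q_0}(H_\tau f)(x) \le c_{\tau,n,s}\sup_{x\in Q,\,Q\subset Q_0}\frac{1}{|Q^\tau|}\int_{Q^\tau}|f| \le C_{\tau,n,s}\,M^d f(x),
\]
the last step because $\frac{1}{|Q^\tau|}\int_{Q^\tau}|f| \le M^d f(y)$ for any $y \in Q \subseteq Q^\tau$, hence in particular the averages over the dyadic cubes $Q^\tau$ are dominated by $M^d f(x)$. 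For the first inequality of the Lemma, (5.1), the same computation applied with $Q_0$ itself as the cube (so $Q^\tau = Q_0^\tau$) and with the competing constant $c = (H_\tau f_2)(x_{Q_0})$ gives $\inf_c m_{|H_\tau f - c|}(s,Q_0) \le \lambda_0 = C_{\tau,n,s}\frac{1}{|Q_0^\tau|}\int_{Q_0^\tau}|f|$ directly, since $m_{|H_\tau f - c|}(s,Q_0) \le \alpha$ exactly when $|\{z\in Q_0 : |H_\tau f(z)-c|>\alpha\}| < s|Q_0|$ — wait, more precisely $m_g(s,Q_0)$ is controlled by any $\alpha$ with $|\{g \ge \alpha\}| \le s|Q_0|$, and the distributional bound above supplies such an $\alpha$. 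The main obstacle, as flagged, is the precise support/constancy analysis of $H_\tau f_2$ on $Q$; everything else is a transcription of the argument already carried out in Theorem 3.1, with "$2Q$" replaced by "$Q^\tau$" and the Hörmander tail estimate replaced by exact constancy.
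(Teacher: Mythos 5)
Your proposal is correct in substance, but it takes a genuinely different route from the paper. The paper treats Lemma 5.1 as a rephrasing of Lemma 4.2 of \cite{CruzUribePerez2012}: inequality (5.1) is obtained by translating that rearrangement-based estimate into medians via Proposition 1.2 of \cite{MedContOsc}, and (5.2) is then an immediate consequence, so the paper's proof is essentially a citation. You instead reprove the oscillation estimate directly, in the spirit of Theorem 3.1: split $f=f\indicator{Q^{\tau}}+f\indicator{(Q^{\tau})^{c}}$, show that $H_{\tau}$ applied to the far part is constant on $Q$, and control the near part by the weak $(1,1)$ bound for $H_{\tau}$; this is, in effect, a reconstruction of the original proof in \cite{CruzUribePerez2012}. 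What your route buys is a self-contained argument exactly parallel to Section 3, with the H\"ormander tail replaced by exact constancy; what it costs is that the weak $(1,1)$ boundedness of Haar shifts with constant $C_{\tau,n}$ is a nontrivial input that must be quoted (it is proved in \cite{CruzUribePerez2012}; it does not follow from calling $H_{\tau}$ a Calder\'on--Zygmund operator, since the kernel has no smoothness). Two smaller points. First, your bookkeeping in the constancy step is slightly misattributed: if a term has $h_{R''}$ non-constant on $Q$ then $R''\subseteq Q$, hence its outer cube $R$ satisfies $|R|\le 2^{\tau n}|R''|\le|Q^{\tau}|$ and so $R\subseteq Q^{\tau}$, which forces $\langle f_{2},h_{R'}\rangle=0$; thus the terms whose outer cube does not strictly contain $Q^{\tau}$ vanish on $Q$, while the (possibly nonzero) constant contribution comes precisely from outer cubes strictly containing $Q^{\tau}$ --- the claim you flagged is true, but the cases are the reverse of what you wrote. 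Second, your passage from the distributional inequality to the median and to $M^{d}f(x)$ is fine; note that the step $|\{z\in Q_{0}:|H_{\tau}f(z)-c|>\lambda_{0}\}|<s|Q_{0}|\Rightarrow m_{|H_{\tau}f-c|}(s,Q_{0})\le\lambda_{0}$ uses $s\le 1/2$, which is part of the hypothesis.
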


(5.1) follows by Proposition 1.2 in \cite{MedContOsc}, and (5.2) is an immediate consequence of (5.1).


  We   need one further inequality. Note  that for any cube $Q$ and any constant $c$, since $t \leq 1 - (1-t)/2^n$,
\begin{align*}
m_{|f - m_f(t,Q)|}(1-(1-t)/2^n,Q) &\leq m_{|f - c|}(1-(1-t)/2^n,Q) + m_{|f - c|}(t,Q)
\\
&\leq 2 m_{|f - c|}(1-(1-t)/2^n,Q).
\end{align*}
Thus
\begin{equation}
m_{|f - m_f(t,Q)|}(1-(1-t)/2^n,Q) \leq 2\inf_c m_{|f - c|}(1-(1-t)/2^n,Q).
\end{equation}

  Returning to the  proof, the use of the local mean oscillation decomposition occurs at line (5.1) of \cite{CruzUribePerez2012}. In its place
    we apply
    the dyadic version of Theorem 4.1. Thus, for any $0 < s < 1/2$ and $1/2 \leq t < 1-s$, by (5.1) and (5.3),
\begin{align*}
|&H_{\tau}f(x) - m_{H_{\tau}f}(Q_N)| \leq 8M^{\sharp,d}_{0,s,Q_0}(H_{\tau}f)(x)
\\
&\qquad \qquad \qquad + \sum_{v=1}^{\infty}\sum_j \Big ( m_{|H_{\tau}f - m_{H_{\tau}f}(t,\widehat{Q^v_j})|}\Big (1-(1-t)/2^n,\widehat{Q^v_j} \Big )
\\
& \qquad \qquad \qquad \qquad + m_{|H_{\tau}f - m_f(t,Q^{v}_j)|}(t,Q^{v}_j) \Big )\indicator{Q^v_j}(x)
\\
&\leq C_{\tau,n,s}M^df(x)+ \sum_{v=1}^{\infty} \sum_j \Big( C_{\tau,n,t} \frac{1}{|\widehat{(Q^v_j)}^{\tau}|}\int_{\widehat{Q^v_j}^{\tau}} |f(x)| dx
\\
&\qquad \qquad \qquad + C_{\tau,n,t}\frac{1}{|(Q^v_j)^{\tau}|} \int_{(Q^v_j)^{\tau}} |f(x)|dx \Big)\indicator{Q^v_j}(x)
\\
&\leq C_{\tau,n,s}Mf(x) + C_{\tau,n,t}\sum_{v=1}^{\infty} \sum_j \Big ( \frac{1}{|\widehat{(Q^v_j)}^{\tau}|}\int_{\widehat{(Q^v_j)}^{\tau}} |f(x)| dx \Big )\indicator{Q^v_j}(x).
\end{align*}
The computation then continues as in \cite{CruzUribePerez2012}.

   In a similar fashion, using Theorem 4.1 and computations
   analogous to  those in \cite{LernerA2}, the interested reader can parallel Lerner's proof of the $A_2$ conjecture
      using medians in place of rearrangements.

\bibliographystyle{amsplain}
\bibliography{myrefs}

\end{document}